\documentclass[11pt, reqno]{amsart}
\usepackage[utf8]{inputenc}
\usepackage{graphicx} 
\usepackage[margin=1in]{geometry}

\usepackage{amsmath,amssymb,amsthm,bbm}
\usepackage{mathtools}
\usepackage{mathabx}
\usepackage{url}
\usepackage{color}
\usepackage{tikz}
\usepackage{soul}
\usepackage{comment}

\theoremstyle{definition} 
\theoremstyle{definition} 
\theoremstyle{definition} \newtheorem*{axi*}{Axiom}
\theoremstyle{definition} \newtheorem{prp}{Proposition}[section]
\theoremstyle{definition} 
\theoremstyle{definition} \newtheorem{lem}[prp]{Lemma}
\theoremstyle{definition} \newtheorem{thm}[prp]{Theorem}
\theoremstyle{definition} \newtheorem{cor}{Corollary}[section]
\theoremstyle{definition} 
\theoremstyle{remark} \newtheorem*{cmt}{Remark}
\newtheorem*{thm*}{Theorem}

\theoremstyle{remark} \newtheorem*{rmk}{Remark}

\newcommand \Z {\mathbb Z}

\title[Fuglede's Conjecture on Cyclic Groups]{Fuglede's Conjecture on Cyclic Groups of Square-Free Order: \\The Case of Rapidly Growing Prime Factors}

\author{Gábor Somlai}
\address{School of Mathematics and Statistics, The University of Melbourne, Parkville, VIC 3010, Australia, on unpaid leave at Eötvös Loránd University}
\email{gabor.somlai@unimelb.edu.au}
\email{gabor.somlai@ttk.elte.hu}
\thanks{Research supported by supported by   ARC Discovery Project DP250104965 and OTKA STARTING Grant 150576}
\begin{document}

\begin{abstract}
    The main result of the paper is that an inductive argument is established to prove the Fuglede's conjecture for an infinite sequence of square-free order cyclic groups. The tile-to-spectral direction of the conjecture holds for all square-free order cyclic groups, whereas we prove the spectral-to-tiling direction only for those groups among them whose prime factors grow rapidly. To achieve this, we develop an inductive technique with roots in a previous paper by the author co-authored with Fallon, Kiss, and Mayeli. Following recent preprints, Fuglede's conjecture remains open only for finite cyclic groups, and until now, there was no infinite family of cyclic groups for which the conjecture was known that possessed an arbitrary number of distinct divisors. 
\end{abstract}
\maketitle

\section{Introduction}

Let $\Omega\subseteq\mathbb R^d$ be a bounded measurable set of positive
measure. We say that $\Omega$ is \emph{spectral} if there exists a set
$\Lambda\subseteq\mathbb R^d$ such that
\[
    \bigl\{e^{2\pi i\langle \lambda,x\rangle}:
           \lambda\in\Lambda\bigr\}
\]
is an orthogonal basis of the Hilbert space $L^2(\Omega)$. In this case, $\Lambda$ is called
a \emph{spectrum} of $\Omega$. We say that $\Omega$ tiles $\mathbb R^d$ by
translations if there is a set $T\subseteq\mathbb R^d$ such that the
translates $\Omega+t$, $t\in T$, form a partition of $\mathbb R^d$ into pairwise disjoint sets up to
sets of measure zero. Fuglede conjectured that these two properties are
equivalent \cite{Fuglede1974}.

The conjecture is false in all dimensions $d\geq 2$. Terence Tao first
disproved the spectral-to-tile implication in dimensions $d\geq 5$
\cite{Tao2004}; Matolcsi subsequently obtained a four-dimensional
counterexample \cite{Matolcsi2005}, and Kolountzakis and Matolcsi reduced
the dimension to three \cite{KolountzakisMatolcsi2006Hadamard}. In the
opposite direction, translational tiles without spectra were constructed
in dimensions $d\geq 5$ by Kolountzakis and Matolcsi
\cite{KolountzakisMatolcsi2006Tiles}, in dimension four by Farkas and
R\'ev\'esz \cite{FarkasRevesz2006}, and in dimension three by Farkas,
Matolcsi, and M\'ora \cite{FarkasMatolcsiMora2006}. Very recently, Tao
Zhang constructed counterexamples to both implications in dimension two
\cite{Zhang2026DimensionTwo}. The one-dimensional case is therefore the
only remaining Euclidean case.

Finite cyclic groups are central to the one-dimensional problem.
Dutkay and Lai proved a series of reductions between Fuglede's conjecture
on $\mathbb R$, on $\mathbb Z$, and on finite cyclic groups
\cite{DutkayLai2014}. For the spectral-to-tile direction, the reverse
reduction depended on the rationality of one-dimensional spectra.
Fu and Song have recently uploaded a preprint to ArXiv which proves that every normalized spectrum of a
bounded spectral set on the real line is rational \cite{FuSong2026}.
Consequently,
\[
 \text{Fuglede's conjecture on }\mathbb R
 \quad\Longleftrightarrow\quad
 \text{Fuglede's conjecture on }\mathbb Z_N
 \text{ for every }N\in\mathbb N.
\]
Thus, after the failure of the conjecture in dimension two, the case of finite
cyclic groups remains the only general unsolved case.

We identify the dual group of $\mathbb Z_N$ with $\mathbb Z_N$ and write
$    \zeta_N=e^{2\pi i/N}$.
A set $A\subseteq\mathbb Z_N$ is a translational tile if there exists
$B\subseteq\mathbb Z_N$ such that every element of $\mathbb Z_N$ has a
unique representation as $a+b$, with $a\in A$ and $b\in B$. We then
write
\[
    A\oplus B=\mathbb Z_N.
\]
The non-empty set $A$ in $\Z_N$ is spectral if there is a set
$\Lambda\subseteq\mathbb Z_N$, with $|A|=|\Lambda|$, such that
\begin{equation}\label{eq:spectral}
    \sum_{a\in A}\zeta_N^{(\lambda-\lambda')a}=0
    \qquad
    \text{whenever }\lambda,\lambda'\in\Lambda,\quad
    \lambda\neq\lambda'.
\end{equation}
Notice that despite the fact that $a$ is an element of $\Z_n$, equation \eqref{eq:spectral} is still well-defined,
Using representatives for the elements of $\Z_n$ in $\{0,\ldots,N-1\}$, the \emph{mask polynomial}
of $A$ is
\[
    A(X)=\sum_{a\in A}X^a.
\]
Notice that the mask polynomial is an element of $\Z[x]/(x^n-1)$.

The tiling identity $A\oplus B=\mathbb Z_N$ is equivalent to
\[
    A(X)B(X)
    \equiv 1+X+\cdots+X^{N-1}
    \pmod{X^N-1}.
\]
Mask polynomials also provide a cyclotomic formulation of spectrality and this is the language we use when we prove our main result.
For $d\in\mathbb Z_N$, let
$    \operatorname{o}_N(d)=\frac{N}{\gcd(N,d)}$
denote the order of $d$ in the additive group $\Z_N$. Since $\zeta_N^d$ is a primitive
$\operatorname{o}_N(d)$th root of unity,
\[
    A(\zeta_N^d)=0
    \quad\Longleftrightarrow\quad
    \Phi_{\operatorname{o}_N(d)}(X)\mid A(X),
\]
where $\Phi_m$ denotes the $m$th cyclotomic polynomial. Hence
$\Lambda$ is a spectrum of $A$ precisely when $|A|=|\Lambda|$ and
\[
    \Phi_{\operatorname{o}_N(\lambda-\lambda')}(X)
        \mid A(X)
    \qquad
    \text{for all distinct }\lambda,\lambda'\in\Lambda.
\]
This relationship between Fourier zeros and cyclotomic divisibility is
one of the main tools in the study of Fuglede's conjecture in cyclic
groups \cite{CovenMeyerowitz1999,Laba2002,KissMalikiosisSomlaiVizer2020,Malikiosis2022}.

Substantial but still fairly few classes of cyclic groups are now known to satisfy Fuglede's
conjecture. Here and below, the displayed primes are distinct unless
otherwise indicated. The conjecture holds for prime powers (\L aba
\cite{Laba2002}), for groups of order $p^nq$
(Kolountzakis and Malikiosis \cite{MalikiosisKolountzakis2017}), and for groups of order $pqr$
by Shi \cite{Shi2019}. Somlai proved the spectral-to-tile direction for
$\mathbb Z_{p^2qr}$ \cite{Somlai2023}, while Kiss, Malikiosis, Somlai,
and Vizer proved the conjecture for $\mathbb Z_{pqrs}$; together with
the earlier cases, their result covers all cyclic groups whose order has
at most four prime factors counted with multiplicity
\cite{KissMalikiosisSomlaiVizer2022}. Zhang subsequently proved the
conjecture for the larger family $\mathbb Z_{p^nqr}$
\cite{Zhang2024GroupRing}. For two-prime orders, Malikiosis \cite{Malikiosis2022} proved the
conjecture for $\mathbb Z_{p^mq^n}$, with $p<q$, whenever
\[
    m\leq 9,\qquad\text{or}\qquad n\leq 6,
\]
and also whenever
\[
    p^{m-2}<q^4.
\]
 A preprint by the author and Fallon, Kiss and Mayeli develops a
large-prime approach to the spectral-to-tile direction in
$\mathbb Z_{p^2q^2r}$ under the assumption
$p^2q^2\leq r$ \cite{FallonKissMayeliSomlai2023}. Notice that similar approach appears in the work of \L aba and Londner \cite{LabaLondner2025Splitting} for the Coven--Meyerowitz conjecture. 

The tile-to-spectral direction is closely connected with the
Coven--Meyerowitz conjecture in integer tilings. For a finite set
$A\subseteq\mathbb Z$ with mask polynomial $A(X)$, define
\[
    \mathcal S_A
      =\bigl\{p^\alpha:
         \Phi_{p^\alpha}(X)\mid A(X)\bigr\}.
\]
The Coven--Meyerowitz conditions are
\[
 \tag{T1}
 |A|=\prod_{s\in\mathcal S_A}\Phi_s(1)
\]
and
\[
 \tag{T2}
 \Phi_{s_1\cdots s_k}(X)\mid A(X)
\]
whenever $s_1,\ldots,s_k\in\mathcal S_A$ are powers of distinct
primes. Coven and Meyerowitz proved that \textup{(T1)} and \textup{(T2)}
are sufficient for tiling, that \textup{(T1)} is necessary for every
finite tile, and that \textup{(T2)} is necessary when $|A|$ has at most
two distinct prime divisors \cite{CovenMeyerowitz1999}. Their
construction produces a standard tiling complement directly from the
prime-power cyclotomic data $\mathcal S_A$.

The assertion that \textup{(T2)} is necessary for every finite tile is
usually referred to as the Coven--Meyerowitz conjecture. It was
explicitly discussed and popularized in a 2011 blog post of Tao
\cite{Tao2011CM}. In the comments to that post, \L aba recorded an
argument, following observations of Meyerowitz, which resolves the
square-free case; see also the self-contained treatment in
\cite{Shi2019}. In particular, if $N$ is square-free, then every tile
$A\subseteq\mathbb Z_N$ is a complete set of representatives for the
cosets of a subgroup. Consequently, a tile of cardinality $k$ has the
standard subgroup complement
    $k\mathbb Z_N$. The main point of this conversation is that there are canonical tiling complements for certain cyclic groups. In the case of square-free cyclic groups the this standard tiling complement is just a subgroup that only depends on the size of the tile. 

More recently, \L aba and Londner used a different starting point, the so-called Sands's theorem and developed a combinatorial method for integer tilings
\cite{LabaLondner2022Methods}. They proved the Coven--Meyerowitz
conjecture for tilings of period (or for the cyclic group $\Z_M$)
\[
    M=(p_1p_2p_3)^2,
\]
first when the three primes are odd
\cite{LabaLondner2023Odd}, and subsequently without the oddness
assumption \cite{LabaLondner2025Even}. Their splitting method  \cite{LabaLondner2025Splitting} also
establishes the conjecture for several families in which one prime is
large relative to the remaining prime powers, including
\[
    M=p_1^{n_1}p_2^{n_2}p_3^{n_3},
    \qquad
    p_1>p_2^{\,n_2-1}p_3^{\,n_3-1},
\]
and
\[
    M=p_1^{n_1}p_2^2p_3^2p_4^2,
    \qquad
    p_1>p_2p_3p_4.
   \]
These results suggest that standard tiling complements are especially
effective when a dominant prime is present. Their approach is inductive and even more, the same holds for the case when $M$ is square-free and such an inductive proof has not been available for the spectral-to-tile direction. 

Motivated by this observation, and by the large-prime reduction proposed
in \cite{FallonKissMayeliSomlai2023}, we consider the following
stability question:
\[
    \mathrm{S\!-\!T}(\mathbb Z_n)
    \quad\stackrel{?}{\Longrightarrow}\quad
    \mathrm{S\!-\!T}(\mathbb Z_n\times\mathbb Z_p),
    \qquad p>n,
\]
where $\mathrm{S\!-\!T}(G)$ denotes the assertion that every spectral
subset of $G$ tiles $G$. Our main result gives an affirmative answer
when $n$ is square-free.
\begin{thm}\label{thm:induction}
Let $n$ be square-free and let $p>n$ be prime. Suppose that
\[
    \mathrm{S\!-\!T}(\mathbb Z_n)
\]
holds. Then
\[
    \mathrm{S\!-\!T}
       (\mathbb Z_n\times\mathbb Z_p)
\]
holds. 
\end{thm}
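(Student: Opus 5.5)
Let $N=np$ and let $A\subseteq \Z_N\cong \Z_n\times\Z_p$ be spectral with spectrum $\Lambda$. Since $N$ is square-free, it suffices to show that $|A|$ divides $N$ and that $A$ is a complete set of representatives of the cosets of the subgroup of order $N/|A|$. Equivalently, $A(X)$ must be divisible by $\Phi_d$ for every $d\mid N$ with $d\nmid N/|A|$, i.e.\ for every $d$ that does not divide the index complement. My plan is to decompose $A$ along the $\Z_p$ coordinate, writing $A=\bigcup_{j\in\Z_p} A_j\times\{j\}$ with $A_j\subseteq \Z_n$. Symmetrically, I decompose $\Lambda$ along the $\Z_n$-cosets of the subgroup $\Z_p$. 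Then I split into two cases according to whether $p$ divides $|A|$.

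\textbf{Case 1: $\Phi_p\cdot\Phi_{dp}$-type divisors absent.} Suppose no difference $\lambda-\lambda'$ has order divisible by $p$ in a way that forces $p$-fibering. Then all of $\Lambda$ lies in a single coset of $\Z_n$, after translation in $\Z_n\times\{0\}$. Projecting $A$ to $\Z_n$ gives a multiset $\pi(A)$ whose mask polynomial vanishes at the relevant $\Lambda$-differences. The key point, where $p>n$ is used, is that $|\Lambda|=|A|\le n$ forces this. If $\Lambda$ meets two distinct $\Z_n$-cosets, consider the differences between the cosets. The vanishing $A(\zeta^{\lambda-\lambda'})=0$ at an element of order divisible by $p$ gives, via the standard structure of vanishing sums of roots of unity of order $mp$ (de Bruijn/Lam--Leung), that the fibers $A_j$ (pushed to a $\Z_m$-quotient) are equidistributed in $j$. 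Then $\sum_j |A_j|$ is a multiple of $p>n$. This is impossible unless $A$ is a union of full $\Z_p$-cosets in some quotient. I will make this precise. Let $m=o(\lambda-\lambda')/p$. The polynomial $\sum_j A_j(\zeta_m)\,\zeta_p^{j}=0$ forces $A_j(\zeta_m)$ to be independent of $j$, because $\zeta_p$ has degree $p-1$ over $\mathbb Q(\zeta_m)$ when $\gcd(m,p)=1$.

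\textbf{Case 2: $\Lambda$ inside one $\Z_n$-coset.} Here I will show $A$ itself is spectral-like in $\Z_n$. The differences in $\Lambda$ all have order dividing $n$, and $A(\zeta^{d})=\pi(A)(\zeta^{d})$ for such $d$. Hence the multiset $\pi(A)$ has $\Lambda$ as an orthogonal set of size $|A|$. If $\pi$ is injective on $A$, then $\pi(A)$ is spectral in $\Z_n$ and tiles by $\mathrm{S\!-\!T}(\Z_n)$. In the square-free case $\pi(A)$ is then a transversal of a subgroup $H\le\Z_n$, so $A$ is a transversal of $H\times\Z_p$, hence a tile. If $\pi$ is not injective, I would use the independence fact from Case 1: a spectrum element pair with difference of order divisible by $p$ must exist whenever $|A|>$ the number of $\Z_p$-fibers. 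Otherwise I use a Fourier counting argument, namely $|\Lambda|\le |\pi(A)|$ via the orthogonality of $\Lambda$ restricted to characters trivial on $\Z_p$. That argument says a set of orthogonal characters of $\Z_n$ has size at most the support of $\pi(A)$.

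\textbf{Case 1 continued (the main obstacle).} When $\Lambda$ meets several $\Z_n$-cosets, the equidistribution says that for the relevant $m$, each fiber $A_j$ has the same value $A_j(\zeta_m)$. I will group $\Lambda$ into its intersections $\Lambda_i$ with the $\Z_n$-cosets. Within a single $\Lambda_i$ the argument of Case 2 applies to every fiber-sum. Across cosets, the above forces $A_j(\zeta_m)=A_{j'}(\zeta_m)$. The goal is to show that $A$ is, up to the subgroup $\Z_p$, either $p$-periodic (namely $A=A'\times \Z_p$ with $A'$ spectral in $\Z_n$ via the induced spectrum, done by induction) or contained in a single $\Z_n$-coset shifted, reducing to Case 2. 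The hard part is ruling out mixed configurations in which the fibers $A_j$ are neither all equal nor concentrated. For this I would exploit $p>n$. The number of distinct values of $o(\lambda-\lambda')$ restricted to $\Z_n$ is at most the number of divisors of $n$, far fewer than $p$. A pigeonhole/rank argument, built on the linear independence of $1,\zeta_p,\dots,\zeta_p^{p-2}$ over $\mathbb Q(\zeta_n)$, should then force the fibers to be either all equal or supported on a single coset. This dichotomy step is where I expect the real work, following the large-prime technique of Fallon--Kiss--Mayeli--Somlai.
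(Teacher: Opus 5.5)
\textbf{Verdict: there is a genuine gap.} Your single-coset case and the equidistribution observation are correct, but the dichotomy you plan to prove in the other case is false.

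\textbf{What is correct.} Take the case where $\Lambda$ lies in a single coset $\mathbb{Z}_n\times\{j_0\}$. The dimension count forces the projection $\pi$ to be injective on $A$, so $\pi(A)$ is spectral in $\mathbb{Z}_n$, and $A\oplus(C\times\mathbb{Z}_p)=\mathbb{Z}_n\times\mathbb{Z}_p$. Square-freeness is not even needed there. Your observation that a difference in $\Lambda$ of order $mp$ forces $A_j(\zeta_m)$ to be independent of $j$ is also correct; it is the levelwise cube rule of \L aba--Marshall.

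\textbf{The false dichotomy.} Take $n=2$, $p=3$ and $A=\{(0,0),(0,1),(1,2)\}$, i.e.\ $A=\{0,4,5\}\subseteq\mathbb{Z}_6$. It is spectral with spectrum $\{0,2,4\}=\{0\}\times\mathbb{Z}_3$, which meets all three $\mathbb{Z}_n$-cosets. Yet its levels $\{0\},\{0\},\{1\}$ are neither all equal nor concentrated on one level, and $A$ is not of the form $A'\times\mathbb{Z}_p$. Likewise, $p\mid|A|$ is no contradiction (here $|A|=3$). Equality of the values $A_j(\zeta_m)$ for $m>1$ says nothing about the sizes $|A_j|$. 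Your opening reformulation is also wrong: $\{0,1\}\subseteq\mathbb{Z}_6$ is a transversal of $\{0,2,4\}$ although $\Phi_6\nmid 1+X$. The correct criterion is $\Phi_d\mid m_A$ for all $1<d\mid|A|$.

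\textbf{Missing ideas.} You need to split by whether $p$ divides $|A|$, and then supply two ideas.

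\emph{Case $p\nmid|A|$.} No two elements of $A$, nor of $\Lambda$, differ by an element of order $p$; otherwise $\Phi_p$ divides $m_\Lambda$ or $m_A$, forcing $p\mid|A|$. Hence $|A|\le n<p$. The \L aba--Marshall bound ($\Phi_{dp}\mid m_A$ and $\Phi_d\nmid m_A$ imply $|A|\ge p$) then gives $\Phi_{dp}\mid m_A\Rightarrow\Phi_d\mid m_A$ for every $d\mid n$. So projecting $\Lambda$ injectively into one $\mathbb{Z}_n$-coset still gives a spectrum, and your single-coset argument finishes. This step is genuinely needed: for $A=\{0,21\}$ and $\Lambda=\{0,1\}$ in $\mathbb{Z}_{42}$, $\Lambda$ meets two cosets.

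\emph{Case $p\mid|A|=p\ell$.} You must allow the levels $A_j$ to be different sets.
\begin{itemize}
\item Since $|A|=|\Lambda|\ge p>n$, both $A$ and $\Lambda$ contain differences of order $p$. These give $|A_j|=|\Lambda_j|=\ell$ for all $j$.
\item At most $n<p$ levels of $\Lambda$ contain an element occurring in no other level. So some level $E=\Lambda_{j_0}$ has all its elements recurring in other levels.
\item Every difference in $E$ of order $d$ then yields both $\Phi_d\mid m_A$ and $\Phi_{dp}\mid m_A$. By the levelwise cube rule, $\Phi_d\mid m_{A_j}$ for every $j$, so $E$ is a common spectrum of all levels.
\item By $\mathrm{S\!-\!T}(\mathbb{Z}_n)$, each $A_j$ is a tile of size $\ell$. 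This is where square-freeness is genuinely used: every such tile is a transversal of $\ell\mathbb{Z}_n$.
\item Hence $A\oplus(\ell\mathbb{Z}_n\times\{0\})=\mathbb{Z}_n\times\mathbb{Z}_p$.
\end{itemize}
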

The fact that tile-to-spectral direction of Fuglede's conjecture hold was published by Shi \cite{Shi2019}, after it was pointed out by \L aba and Meyerowitz on a blogpost of Tao that the result follows from a theorem of Tijdeman, sometimes also called the dialation lemma.
As a corollary we obtain immediately the following. 
\begin{thm}
Let $M=\prod_{i=1}^k p_i$ be square-free ($p_i$ are different primes). Assume that $p_{j+1}>\prod_{i=1}^{j}p_i$ for $j=1,2,\ldots, k-1$. Then Fuglede's conjecture holds in $\Z_M$. 
\end{thm}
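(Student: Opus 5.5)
The corollary follows by induction on $k$, using Theorem~\ref{thm:induction} for the spectral-to-tile direction and the square-free Coven--Meyerowitz argument for the other direction.

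\textbf{Tile-to-spectral.} Since $M$ is square-free, we can use the argument of \L aba and Meyerowitz recorded on Tao's blog and written out by Shi \cite{Shi2019}, which rests on Tijdeman's dilation lemma. It shows that every tile $A\subseteq\Z_M$ satisfies (T1) and (T2). By the theorem of \L aba \cite{Laba2002}, a set satisfying (T1) and (T2) is spectral. No growth condition on the primes is needed for this direction.

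\textbf{Spectral-to-tile.} For $j=1,\ldots,k$ put $n_j=\prod_{i=1}^{j}p_i$, so that $n_k=M$.

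For the base case $j=1$, the group is $\Z_{p_1}$, a group of prime order. There the spectral-to-tile direction is classical and is a special case of \L aba's prime power result \cite{Laba2002}. Directly: a spectral set $A$ with $1<|A|<p_1$ would need some $\Phi_{p_1}\mid A(X)$. This forces $A(X)$ to be a multiple of $1+X+\cdots+X^{p_1-1}$ modulo $X^{p_1}-1$, which is impossible. Hence $|A|\in\{1,p_1\}$, and in both cases $A$ tiles.

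For the inductive step, assume $\mathrm{S\!-\!T}(\Z_{n_j})$ holds. The number $n_j$ is square-free, and the hypothesis gives that $p_{j+1}>n_j$ is a prime. Theorem~\ref{thm:induction} therefore yields $\mathrm{S\!-\!T}(\Z_{n_j}\times\Z_{p_{j+1}})$. Since $p_{j+1}\nmid n_j$, the Chinese remainder theorem gives $\Z_{n_j}\times\Z_{p_{j+1}}\cong\Z_{n_{j+1}}$.

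It remains to check that spectrality and tiling are preserved under group isomorphism. Tiling is clearly preserved. Spectrality is formulated through characters, and an isomorphism induces a bijection of dual groups preserving orthogonality, so it is preserved as well. Thus $\mathrm{S\!-\!T}(\Z_{n_{j+1}})$ holds, and induction gives the claim for $\Z_M$.

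The only point requiring care is this transfer of the statement across the identification $\Z_{n_j}\times\Z_{p_{j+1}}\cong\Z_{n_{j+1}}$. Everything substantive is contained in Theorem~\ref{thm:induction} and the cited tile-to-spectral result.
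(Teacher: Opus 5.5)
Your proof is correct and follows the paper's approach. The paper presents this as an immediate corollary: the tile-to-spectral direction comes from the square-free Coven--Meyerowitz/Tijdeman argument written out by Shi, and the spectral-to-tile direction comes from iterating Theorem~\ref{thm:induction} along $n_j=p_1\cdots p_j$, using $p_{j+1}>n_j$. Your explicit base case for $\Z_{p_1}$ and your check that both properties transfer across $\Z_{n_j}\times\Z_{p_{j+1}}\cong\Z_{n_{j+1}}$ just fill in details the paper leaves implicit.
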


In the second section of the paper, we introduce the techniques from Fourier analysis used in studying Fuglede's conjecture, focusing on when one or several cyclotomic polynomials divide a polynomial with integer coefficients. The third section contains the proof of the main result. The paper relies on several previous results, drawing primarily on a joint paper by the author with Fallon, Kiss, and Mayeli, while also using a seemingly minor but all the more useful observation by \L aba and Marshall. The paper includes the proofs of these earlier results to ensure the presentation is self-contained.

\section{Cyclotomic divisibility and the cube rule}
\label{sec:cyclotomic-divisibility}

In this section we collect several standard consequences of cyclotomic
divisibility that will be used throughout the paper. We allow sets to
be replaced by multisets. Thus, if $A$ is a multiset in $\mathbb Z_M$,
we write $w_A(a)$ for the multiplicity of $a$ and define
\[
    m_A(X)=\sum_{a\in\mathbb Z_M}w_A(a)X^a,
    \qquad
    |A|=m_A(1)=\sum_{a\in\mathbb Z_M}w_A(a).
\]
For a set, of course, $w_A$ is its characteristic function.

If $N\mid M$, the projection of $A$ to $\mathbb Z_N$ is the multiset
$A\bmod N$ defined by
\[
    w_{A\bmod N}(x)
       =
    \sum_{\substack{a\in\mathbb Z_M\\a\equiv x\;(\mathrm{mod}\,N)}}
       w_A(a).
\]
Its mask polynomial satisfies $
    m_{A\bmod N}(X)
       \equiv m_A(X)
       \pmod{X^N-1}.
$
It follows that
\begin{equation}
\label{eq:cyclotomic-projection}
    \Phi_N(X)\mid m_A(X)
    \quad\Longleftrightarrow\quad
    \Phi_N(X)\mid m_{A\bmod N}(X).
\end{equation}
Thus, when studying divisibility by $\Phi_N$, we may always project the
multiset to $\mathbb Z_N$. The projection is a surjective homomorphism from $\Z_M$ to $\Z_N$ (e.g. $x \mapsto \frac{M}{N}x$).

\subsection{Cuboids and the cube rule}
Although we will not explicitly use the cuboid methods—that is, the cube rule—but only their consequences, we present the basic method \cite{KissMalikiosisSomlaiVizer2020} that is the foundation for the statements of this in section.

Let
$    N=\prod_{i=1}^r p_i^{\alpha_i}$,
where $p_1,\ldots,p_r$ are distinct primes. An \emph{$N$-cuboid} is a
signed multiset $\Delta$ in $\mathbb Z_N$ whose mask polynomial has the
form
\[
    m_\Delta(X)
      =
    X^c\prod_{i=1}^r
       \left(1-X^{u_iN/p_i}\right),
    \qquad
    (u_i,p_i)=1.
\]
The nonzero coefficients of $m_\Delta$ are $1$ and $-1$. For a multiset
$A$ in $\mathbb Z_M$, where $N\mid M$, define its evaluation on
$\Delta$ by
\[
    A^N[\Delta]
       =
    \sum_{x\in\mathbb Z_N}
       w_{A\bmod N}(x)w_\Delta(x).
\]
The following is the standard cube rule for cyclotomic
divisibility.
\begin{prp}[Cube rule]
\label{prop:cuboid-criterion}
Let $N\mid M$, and let $A$ be an integer-valued multiset in
$\mathbb Z_M$. The following are equivalent:
\begin{enumerate}
    \item $\Phi_N(X)\mid m_A(X)$;
    \item
    $
        A^N[\Delta]=0
    $
    for every $N$-cuboid $\Delta$;
    \item modulo $X^N-1$, the projection of $A$ is an integer linear
    combination of prime-order cosets (some people call them fibers):
    \[
        m_A(X)
        \equiv
        \sum_{\substack{q\mid N\\q\ {\rm prime}}}
             P_q(X)\Phi_q\left(X^{N/q}\right)
        \pmod{X^N-1},
    \]
    where $P_q(X)\in\mathbb Z[X]$.
\end{enumerate}
\end{prp}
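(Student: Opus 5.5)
We prove the cycle of implications (1)$\Rightarrow$(3)$\Rightarrow$(2)$\Rightarrow$(1). By \eqref{eq:cyclotomic-projection} and the fact that each of (2) and (3) depends only on $A\bmod N$, we may assume throughout that $M=N$.

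For (1)$\Rightarrow$(3) we use the classical description of the ideal generated by $\Phi_N$ in $\mathbb Z[X]/(X^N-1)$. This is the de Bruijn / Lam--Leung / Schoenberg theorem: the polynomials vanishing at $\zeta_N$ form the $\mathbb Z$-span of the translates of the $\Phi_q(X^{N/q})$, where $q$ runs over the primes dividing $N$. One way to prove it is by induction on the number of prime factors. Set $P=\prod_i p_i$ and write $N=P\cdot N'$. A polynomial vanishing at $\zeta_N$ can be reduced modulo the fibres $\Phi_q(X^{N/q})$, which shifts only within cosets. In doing so, one uses that $\Phi_N(X)=\Phi_{P}(X^{N/P})$ and that $\mathbb Z[\zeta_N]$ is free over $\mathbb Z[\zeta_{N/P}]$ with an explicit basis. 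This step is the main obstacle, since the integrality of the coefficients $P_q$ (as opposed to rationality) is exactly the content of de Bruijn's theorem. Since the statement is called standard, we would cite \cite{KissMalikiosisSomlaiVizer2020} (and de Bruijn) for it rather than reprove it. If a self-contained proof is wanted, the cleanest route is induction on $r$. For $r=1$, with $N=p^\alpha$, we have $\Phi_N(X)=\Phi_p(X^{N/p})$, and division by the monic $\Phi_N$ in $\mathbb Z[X]$ gives (3) directly. For the inductive step, one uses the tensor decomposition $\mathbb Z[\zeta_N]\cong\bigotimes_i\mathbb Z[\zeta_{p_i^{\alpha_i}}]$ and the exactness of tensoring free $\mathbb Z$-modules.

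For (3)$\Rightarrow$(2) we argue by linearity. It suffices to show that every translate of a fibre $X^c\Phi_q(X^{N/q})$ has zero evaluation on every cuboid $\Delta$. The evaluation $A^N[\Delta]$ is the coefficient of $X^0$ in $m_A(X)m_\Delta(X^{-1})$ modulo $X^N-1$. The cuboid contains the factor $(1-X^{-u_qN/q})$, and
\[
\Phi_q(X^{N/q})\bigl(1-X^{-u_qN/q}\bigr)\equiv 0 \pmod{X^N-1},
\]
because $\Phi_q(X^{N/q})$ is invariant under multiplication by $X^{N/q}$ modulo $X^N-1$, and $u_q$ is coprime to $q$. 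Hence the whole product vanishes, and so the evaluation is zero.

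For (2)$\Rightarrow$(1), write $F(X)=\sum_{x\in\mathbb Z_N}w_A(x)X^x$. We compute
\[
\sum_{c\in\mathbb Z_N}A^N[\Delta_c]\,\zeta_N^{-c}
=F(\zeta_N)\prod_i\bigl(1-\zeta_N^{-u_iN/p_i}\bigr)
\]
for the cuboids $\Delta_c=X^c\prod_i(1-X^{u_iN/p_i})$, taking all $u_i=1$ (more precisely, using the correlation identity with sign conventions adjusted). Each factor $1-\zeta_{p_i}^{-1}$ is nonzero. So if every evaluation vanishes, the left-hand side is $0$, and therefore $F(\zeta_N)=0$, that is, $\Phi_N\mid m_A$. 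This closes the cycle. The only nontrivial ingredient is the integrality step in (1)$\Rightarrow$(3), for which we rely on the cited literature.
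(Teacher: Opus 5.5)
Your proposal is correct, and it goes further than the paper, which gives no argument for this proposition. The paper attributes (1)$\Leftrightarrow$(3) to de Bruijn--R\'edei--Schoenberg and the cuboid form to \cite{KissMalikiosisSomlaiVizer2020} and \cite{LabaMarshall2022}. You rely on the same classical theorem for the one deep step, (1)$\Rightarrow$(3), but you give genuine proofs of the other two implications.

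\textbf{(3)$\Rightarrow$(2).} Each fibre $X^{c}\Phi_q(X^{N/q})$ is killed modulo $X^N-1$ by the factor $1-X^{-u_qN/q}$ of $m_\Delta(X^{-1})$. Coprimality of $u_q$ is not needed, since $X^{kN/q}\Phi_q(X^{N/q})\equiv\Phi_q(X^{N/q})$ for every integer $k$.

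\textbf{(2)$\Rightarrow$(1).} You Fourier-transform the evaluations over all translates of a single cuboid, which works. Two small repairs:
\begin{enumerate}
\item The right-hand side $F(\zeta_N)\prod_i(1-\zeta_N^{-N/p_i})$ comes from the weight $\zeta_N^{c}$. With $\zeta_N^{-c}$ you get $F(\zeta_N^{-1})\prod_i(1-\zeta_{p_i})$ instead. This is harmless, because $F$ has integer coefficients, so $F(\zeta_N^{-1})=\overline{F(\zeta_N)}$.
\item The conclusion $F(\zeta_N)=0\Rightarrow\Phi_N\mid m_A$ uses the irreducibility of $\Phi_N$, together with $m_A\equiv F \pmod{X^N-1}$ and $\Phi_N\mid X^N-1$. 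Say so explicitly.
\end{enumerate}

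\textbf{(1)$\Rightarrow$(3).} Your tensor-product sketch is more than a heuristic: it completes this step and makes the proposition self-contained. Write $n_i=p_i^{\alpha_i}$ and $R_i=\mathbb Z[X_i]/(X_i^{n_i}-1)$.
\begin{enumerate}
\item The Chinese remainder theorem gives $\mathbb Z[X]/(X^N-1)\cong\bigotimes_i R_i$. Under it, evaluation at $\zeta_N$ becomes $\bigotimes_i\pi_i$, with $\pi_i$ evaluation at a primitive $n_i$th root of unity, followed by the multiplication map $\bigotimes_i\mathbb Z[\zeta_{n_i}]\to\mathbb Z[\zeta_N]$.
\item That multiplication map is a surjection between free $\mathbb Z$-modules of the same rank $\varphi(N)$, hence an isomorphism.
\item Right exactness of $\otimes$ gives $\ker\bigl(\bigotimes_i\pi_i\bigr)=\sum_i R_1\otimes\cdots\otimes\ker\pi_i\otimes\cdots\otimes R_r$. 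Freeness is not needed for this part, only right exactness.
\item Your prime-power case shows $\ker\pi_i$ is generated by $\Phi_{p_i}(X_i^{n_i/p_i})$, the indicator of the subgroup of order $p_i$ in $\mathbb Z_{n_i}$. This corresponds exactly to the fibre $\Phi_{p_i}(X^{N/p_i})$ in $\mathbb Z_N$.
\end{enumerate}
With this, the half-sketched alternative via $\mathbb Z[\zeta_N]$ being free over $\mathbb Z[\zeta_{N/P}]$ becomes unnecessary. It also shows that the only deep input in the whole proposition is the prime-power case plus a formal tensor argument.
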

The equivalence between cyclotomic divisibility and the coset or fiber
decomposition is the classical de Bruijn \cite{deBruijn1953}--R\'edei \cite{Redei1954}--Schoenberg \cite{Schoenberg1964} theorem on
vanishing sums of roots of unity. We would also highlight the result of Lam and Leung \cite{LamLeung2000} on vanishing sums of roots of unities, for quantitative bounds on the subsets of cyclic groups having at least one Fourier root.
For the equivalent cuboid formulation applied for Fuglede's conjecture, see
\cite[Section~3]{KissMalikiosisSomlaiVizer2020} and
\cite[Proposition~4.2]{LabaMarshall2022}.

When $N$ is square-free, the geometric meaning is particularly simple.
By the Chinese remainder theorem,
$
    \mathbb Z_N
       \cong
    \mathbb Z_{p_1}\times\cdots\times\mathbb Z_{p_r}$.
An $N$-cuboid can then be identified with the following Descartes product, a cube
\[
    C=\prod_{i=1}^r\{a_i,b_i\},
    \qquad a_i\neq b_i ~for ~i=1,\ldots, r ,
\]
having $2^r$ vertices. Fixing a vertex $c_0\in C$, the cuboid criterion
becomes
\begin{equation}
\label{eq:cube-rule}
    \sum_{c\in C}
       (-1)^{d_H(c,c_0)}w_A(c)=0,
\end{equation}
where $d_H$ denotes the Hamming distance. We say that $A$ satisfies the
\emph{$N$-cube rule} if \eqref{eq:cube-rule} holds for every such cube.
Thus, for square-free $N$,
$
    \Phi_N(X)\mid m_A(X)$
 if and only if 
   $ A$ satisfies the $N$-cube rule.

It is important that the fiber decomposition in
Proposition~\ref{prop:cuboid-criterion} is an integer linear
combination. In general, its coefficients need not be nonnegative, but the positivity of the coefficients holds if $N$ has at most two different prime divisors.

The following is a basic and well-known observation in the theory of cyclotomic divisibility. 
\begin{cor}
\label{cor:prime-divides-cardinality}
Let $p$ be prime divisor of $M$. Let $A$ be a subset of $\Z_M$. 
\begin{enumerate}
    \item 
If
$
    \Phi_p(X)\mid m_A(X)$,
then $   p\mid |A|$.
More generally, if $\Phi_{p^\alpha}(X)\mid m_A(X)$ for some
$\alpha\geq1$, then $p\mid |A|$.
\item If
$
    \Phi_p(X)\mid m_A(X)$, then each $\frac{M}{p}$ coset contains the same amount of elements of $A$.
\end{enumerate}
\end{cor}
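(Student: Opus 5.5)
The plan is to reduce both parts to vanishing sums of $p$-th roots of unity by projecting to $\mathbb Z_p$, using \eqref{eq:cyclotomic-projection}. Write $B = A \bmod p$. This is a multiset in $\mathbb Z_p$ with weights
\[
w_B(x)=\#\{a\in A: a\equiv x \pmod p\}.
\]
These weights are exactly the numbers of elements of $A$ in the cosets $x+p\mathbb Z_M$. Those cosets are the cosets of the subgroup of order $M/p$, which is what ``$\tfrac{M}{p}$ coset'' means in part (2). Since $\Phi_p\mid m_A$, the projection gives $\Phi_p(X)\mid m_B(X)$ in $\mathbb Z[X]$. We may reduce $m_B$ modulo $X^p-1$ to a polynomial of degree at most $p-1$, namely $\sum_{x=0}^{p-1}w_B(x)X^x$.

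The key step handles part (2). $\Phi_p(X)=1+X+\cdots+X^{p-1}$ has degree $p-1$, and $m_B$ has degree at most $p-1$. Divisibility therefore forces $m_B=c\,\Phi_p$ for some constant $c$, which is an integer by comparing leading coefficients, since $\Phi_p$ is monic. One point needs care here: divisibility of the reduced polynomial, not merely of some representative. It holds because $\Phi_p\mid X^p-1$, so divisibility by $\Phi_p$ is preserved under reduction modulo $X^p-1$. Hence $w_B(x)=c$ for every $x$, i.e.\ every coset of the index-$p$ subgroup meets $A$ in the same number $c$ of elements. Part (1) in the case $\alpha=1$ follows at once: $|A|=m_B(1)=pc$. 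Equivalently, this is the minimal-polynomial argument: $\zeta_p$ has minimal polynomial of degree $p-1$ over $\mathbb Q$, so the relation $\sum_x w_B(x)\zeta_p^x=0$ is a multiple of $1+\zeta_p+\cdots+\zeta_p^{p-1}$.

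For the general prime-power statement in (1), I would avoid the projection and evaluate at $X=1$. We have $\Phi_{p^\alpha}(X)=\Phi_p(X^{p^{\alpha-1}})$, so $\Phi_{p^\alpha}(1)=p$. Write $m_A(X)\equiv \Phi_{p^\alpha}(X)Q(X)$ modulo $X^M-1$ in $\mathbb Z[X]$. Here $Q$ has integer coefficients because $\Phi_{p^\alpha}$ is monic, so division with remainder stays in $\mathbb Z[X]$. Substituting $X=1$, where $X^M-1$ vanishes, gives $|A|=m_A(1)=p\,Q(1)$, so $p\mid |A|$.

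The main (mild) obstacle is bookkeeping rather than mathematics: making sure that divisibility of the mask polynomial, which is defined only modulo $X^M-1$, is meaningful. The fix is to note that $\Phi_N\mid X^M-1$ whenever $N\mid M$, so the choice of representative is irrelevant. One must also use integrality, via monicity of the cyclotomic polynomials, to conclude that $Q(1)$ and $c$ are integers.
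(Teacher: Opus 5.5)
Your proof is correct and is essentially the paper's argument. For (1), the paper also writes $m_A=\Phi_{p^\alpha}Q$ and evaluates at $X=1$ using $\Phi_{p^\alpha}(1)=p$; for (2), its one-line justification via $\Phi_p=1+X+\cdots+X^{p-1}$ is exactly your observation that, after projecting to $\mathbb Z_p$, a polynomial of degree at most $p-1$ divisible by $\Phi_p$ must be an integer multiple of it. Your remarks on the integrality of the quotient and on independence of the representative modulo $X^M-1$ make explicit details the paper leaves unstated.
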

\begin{proof}
\begin{enumerate}
    \item 
The prime-power statement also follows immediately by writing
\[
    m_A(X)=\Phi_{p^\alpha}(X)Q(X)
\]
and evaluating this equation at $X=1$, since
$    \Phi_{p^\alpha}(1)=p$ for $\alpha \ge 1$.
\item 
The statement follows from the fact that $\Phi_p=\sum_{i=0}^{p-1}x^i$.
\end{enumerate}
\end{proof}

\subsection{The cube rule on cosets}

We next record a levelwise version of the cube rule. Although the
statement will be useful below, it is a standard consequence of the
cuboid criterion.

Let $ 
    M=kp$,
where $p$ is prime and $(k,p)=1$. Using the Chinese remainder theorem,
we identify
\[
    \mathbb Z_{kp}\cong\mathbb Z_k\times\mathbb Z_p.
\]
For $j\in\mathbb Z_p$, define the $j$th \emph{$p$-level} of a multiset
$A\subseteq\mathbb Z_{kp}$ by
\[
    w_{A_j}(x):=w_A(x,j),
    \qquad x\in\mathbb Z_k.
\]
Such a set $A_j$ is the intersection of $A$ with a coset of $\Z_k$ so we may identify this coset with $\Z_k$ and we consider $A_j$ as a subset of $\Z_k$ as well. 
If $d\mid k$, divisibility of $m_{A_j}$ by $\Phi_d$ is understood after considering this identification of the cosets of $\Z_k$ and of course
projecting $A_j$ from $\mathbb Z_k$ to $\mathbb Z_d$.

The following Lemma is contained in \cite{LabaMarshall2022}.   
\begin{lem}[Levelwise cube rule, \L aba-Marshall]
\label{lem:levelwise-cube-rule}
Let $d\mid k$, and suppose that
\[
    \Phi_{dp}(X)\mid m_A(X).
\]
Then, for every $d$-cuboid $\Delta$, the quantities
\[
    A_j^d[\Delta],
    \qquad j\in\mathbb Z_p,
\]
are all equal. Consequently, the following are equivalent:
\begin{enumerate}
    \item $\Phi_d(X)\mid m_A(X)$;
    \item $\Phi_d(X)\mid m_{A_j}(X)$ for every $j\in\mathbb Z_p$;
    \item every $p$-level $A_j$ satisfies the $d$-cube rule.
\end{enumerate}
\end{lem}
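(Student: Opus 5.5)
The plan is to reduce the levelwise cube rule to the cube rule of Proposition~\ref{prop:cuboid-criterion} applied in $\mathbb Z_{dp}$. The starting observation is that a $dp$-cuboid factors as a $d$-cuboid times a two-point difference in the $p$-direction. After projecting $A$ to $\mathbb Z_{dp}$ (allowed by \eqref{eq:cyclotomic-projection}), we identify $\mathbb Z_{dp}\cong\mathbb Z_d\times\mathbb Z_p$, compatibly with $\mathbb Z_{kp}\cong\mathbb Z_k\times\mathbb Z_p$. The $p$-coordinate is unchanged under this projection, so the projection of the level $A_j$ to $\mathbb Z_d$ is exactly the $j$th level of $A\bmod dp$.

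Let $\Delta$ be a $d$-cuboid with mask $X^c\prod_{q\mid d}(1-X^{u_qd/q})$. For $j\neq j'$ in $\mathbb Z_p$ we form the signed multiset $\Delta\times(\{j\}-\{j'\})$ in $\mathbb Z_d\times\mathbb Z_p$. We will check that this is a $dp$-cuboid. Under CRT, the factor $1-X^{u\,dp/p}=1-X^{ud}$ is a shift by an element that is $0$ in $\mathbb Z_d$ and nonzero in $\mathbb Z_p$. For each prime $q\mid d$, the factor $1-X^{u_q d/q}$ in $\mathbb Z_d$ lifts to $1-X^{u_q' dp/q}$ in $\mathbb Z_{dp}$, where $u_q'\equiv u_q p^{-1}\pmod q$ is coprime to $q$. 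Its $p$-component is $0$ because $dp/q$ is divisible by $p$. Choosing the base point $X^{c'}$ with $c'\equiv c\pmod d$ and $c'\equiv j\pmod p$, we get the product cuboid, up to sign. The cube rule then gives $0=(A\bmod dp)^{dp}[\Delta\times(\{j\}-\{j'\})]=A_j^d[\Delta]-A_{j'}^d[\Delta]$, which is the first claim. The only real care needed is this CRT bookkeeping, i.e.\ verifying that the lifted exponents have the correct residues and that each $q$-factor has trivial $p$-component. I expect that to be the main (though minor) obstacle.

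For the equivalences: (2)$\Leftrightarrow$(3) is Proposition~\ref{prop:cuboid-criterion} applied to each level in $\mathbb Z_k$ with $d\mid k$, in the square-free setting of \eqref{eq:cube-rule}. For general $d$, we simply use the cuboid form (2) of that proposition in place of the cube form.

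For (1)$\Leftrightarrow$(2), note that projecting $A$ to $\mathbb Z_d$ forgets the $p$-coordinate, since $(d,p)=1$. Hence $A\bmod d=\sum_j (A_j\bmod d)$, and so $A^d[\Delta]=\sum_{j\in\mathbb Z_p}A_j^d[\Delta]=p\,A_0^d[\Delta]$ by the first part. Thus $A^d[\Delta]=0$ if and only if $A_j^d[\Delta]=0$ for all $j$. Quantifying over all $d$-cuboids $\Delta$ and invoking Proposition~\ref{prop:cuboid-criterion} on both sides gives $\Phi_d\mid m_A$ if and only if $\Phi_d\mid m_{A_j}$ for every $j$.
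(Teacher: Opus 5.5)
Your proof is correct and follows essentially the same route as the paper. You reduce to $\mathbb Z_{dp}\cong\mathbb Z_d\times\mathbb Z_p$, apply the cube rule to the $dp$-cuboid $\Delta\times(\{j\}-\{j'\})$ (the paper's $\Delta_j-\Delta_{j'}$) to get equal level evaluations, and then combine $A^d[\Delta]=\sum_j A_j^d[\Delta]=p\,c_\Delta$ with Proposition~\ref{prop:cuboid-criterion}; the only difference is that you carry out the CRT check that this product really is a $dp$-cuboid, which the paper simply asserts.
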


\begin{proof}
By \eqref{eq:cyclotomic-projection}, it is enough to work in
$\mathbb Z_{dp}\cong\mathbb Z_d\times\mathbb Z_p$.

Let $\Delta$ denote a $d$-cuboid, and let $\Delta_j$ denote its translate in the
$j$th $p$-level. If $j\neq j'$, then
\[
    \Delta_j-\Delta_{j'}
\]
is a $dp$-cuboid. 
Since
$\Phi_{dp}\mid m_A$, Proposition~\ref{prop:cuboid-criterion} gives
\[
    A^{dp}[\Delta_j-\Delta_{j'}]=0.
\]
Therefore
\[
    A_j^d[\Delta]=A_{j'}^d[\Delta]
\]
for all $j,j'\in\mathbb Z_p$. Denote this common integer by
$c_\Delta$.

The evaluation of the projection of $A$ to $\mathbb Z_d$ is the sum of
the evaluations over all $p$-levels. Hence
\begin{equation}
\label{eq:level-evaluation}
    A^d[\Delta]
      =
    \sum_{j\in\mathbb Z_p}A_j^d[\Delta]
      =
    p\,c_\Delta.
\end{equation}
By Proposition~\ref{prop:cuboid-criterion},
\[
    \Phi_d\mid m_A
\]
holds if and only if $A^d[\Delta]=0$ for every $d$-cuboid $\Delta$.
By \eqref{eq:level-evaluation}, this is equivalent to
$c_\Delta=0$ for every $\Delta$, which in turn is equivalent to the
$d$-cube rule on every level. A final application of
Proposition~\ref{prop:cuboid-criterion} proves the equivalence with
$\Phi_d\mid m_{A_j}$ for every $j$.
\end{proof}

\begin{cmt}
\label{rem:levelwise-attribution}
No novelty is claimed for Lemma~\ref{lem:levelwise-cube-rule}. In the
square-free setting, the implication
\[
    \Phi_{dp}\mid m_A,\quad \Phi_d\mid m_A
    \quad\Longrightarrow\quad
    \Phi_d\mid m_{A_j}\quad\text{for every }j
\]
appears in \cite[Lemma~2.13]{KissMalikiosisSomlaiVizer2022}. The proof
above is the direct cuboid argument underlying that result; compare
also \cite[Proposition~4.2 and Lemma~6.1]{LabaMarshall2022}.
\end{cmt}

Taking $d=k$ in Lemma~\ref{lem:levelwise-cube-rule} gives the form that
will be used most often:
\[
    \Phi_{kp}(X)\mid m_A(X)
    \quad\Longrightarrow\quad
    \left[
        \Phi_k(X)\mid m_A(X)
        \ \Longleftrightarrow\
        \text{every $p$-level satisfies the $k$-cube rule}
    \right],
\]
where $k$ is assumed square-free when the terminology ``cube rule'' is
used.

The same argument gives a useful cardinality bound.

\begin{cor}[\L aba--Marshall bound]
\label{cor:laba-marshall}
Let $A$ be a nonnegative multiset in $\mathbb Z_M$. Suppose that
$p\nmid d$, $dp\mid M$, and
\[
    \Phi_{dp}(X)\mid m_A(X),
    \qquad
    \Phi_d(X)\nmid m_A(X).
\]
Then
\[
    |A|\geq p.
\]
\end{cor}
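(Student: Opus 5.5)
We follow the proof of Lemma~\ref{lem:levelwise-cube-rule}. By \eqref{eq:cyclotomic-projection} we may project $A$ to $\mathbb Z_{dp}\cong\mathbb Z_d\times\mathbb Z_p$. The projection of a nonnegative multiset is again a nonnegative multiset of the same cardinality $|A|$, and the divisibility assumptions are unchanged. So it suffices to prove the bound for a nonnegative multiset in $\mathbb Z_{dp}$.

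For $j\in\mathbb Z_p$, let $A_j$ denote the $j$th $p$-level, viewed as a multiset in $\mathbb Z_d$ (after projecting to $\mathbb Z_d$ if needed).
\begin{itemize}
\item Since $\Phi_{dp}\mid m_A$, the first part of Lemma~\ref{lem:levelwise-cube-rule} applies: for every $d$-cuboid $\Delta$, the evaluations $A_j^d[\Delta]$ are independent of $j$. Call the common value $c_\Delta$.
\item Since $\Phi_d\nmid m_A$, Proposition~\ref{prop:cuboid-criterion} gives a $d$-cuboid $\Delta$ with $A^d[\Delta]\neq 0$.
\item By \eqref{eq:level-evaluation}, $A^d[\Delta]=p\,c_\Delta$, so $c_\Delta\neq 0$ for this $\Delta$.
\end{itemize}
Hence $A_j^d[\Delta]\neq 0$ for every $j\in\mathbb Z_p$.

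Next we bound the size of each level. The evaluation $A_j^d[\Delta]$ is a signed sum of the weights $w_{A_j}$ at the vertices of $\Delta$, with signs $\pm1$. If $A_j$ were empty, every weight would vanish and the evaluation would be $0$. Since $A_j^d[\Delta]\neq 0$, each level contains at least one point, so $|A_j|\geq 1$.

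Finally, the levels partition $A$, which gives
\[
|A|=\sum_{j\in\mathbb Z_p}|A_j|\geq p .
\]

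The only point that needs care is nonnegativity: it is what guarantees that a nonzero signed evaluation forces the level to be nonempty. For a signed multiset the levels could cancel, and the bound would fail. Everything else is a direct reuse of the argument in Lemma~\ref{lem:levelwise-cube-rule}, so I expect no real obstacle.
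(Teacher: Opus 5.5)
Your proof is correct and is essentially the paper's argument: both choose a $d$-cuboid $\Delta$ with $A^d[\Delta]\neq 0$ and use the levelwise constancy from Lemma~\ref{lem:levelwise-cube-rule}. The paper finishes with $p\mid A^d[\Delta]$ and $|A^d[\Delta]|\le |A|$, whereas you note that all $p$ levels have the same nonzero evaluation and so are nonempty, which is the same bound read level by level; your explicit projection to $\mathbb Z_{dp}$ also spells out the case $p^2\mid M$, which the paper leaves implicit.
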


\begin{proof}
Since $\Phi_d\nmid m_A$, the cube rule provides a $d$-cuboid
$\Delta$ such that
\[
    A^d[\Delta]\neq0.
\]
By \eqref{eq:level-evaluation},
\[
    p\mid A^d[\Delta].
\]
As $A^d[\Delta]$ is a nonzero integer,
\[
    \left| A^d[\Delta] \right|\geq p.
\]
On the other hand, since $A$ has nonnegative multiplicities and the
coefficients of $\Delta$ belong to $\{-1,0,1\}$,
\[
p \leq    \bigl|A^d[\Delta]\bigr|
       \leq |A|.
\]
\end{proof}
Corollary~\ref{cor:laba-marshall} is the one-prime case of
\cite[Corollary~6.4]{LabaMarshall2022}. A related multi-prime form of Corollary~\ref{cor:laba-marshall} states
that if $p_1,\ldots,p_s$ are distinct primes not dividing $d$,
\[
    \Phi_{dp_i}(X)\mid m_A(X)
    \quad (i=1,\ldots,s),
    \qquad
    \Phi_d(X)\nmid m_A(X),
\]
then
\[
    |A|\geq p_1\cdots p_s;
\]
see \cite[Corollary~6.5]{LabaMarshall2022}.

\section{Inductive step}
\label{sec:large-prime}

Let $n$ be a positive integer and let $p>n$ be prime. Since
$\gcd(n,p)=1$, we will use both identifications
\[
    \mathbb Z_{np}
       \cong
    \mathbb Z_n\times\mathbb Z_p.
\]
The cyclic-group representation $\Z_{np}$ will be used for cyclotomic divisibility of mask the polynomial of a subset $A$ of $\Z_{np}$ and, while the product representation $\Z_n \times \Z_p$ will be used
to describe intersection of $A$ with cosets of $\Z_n$, which we also call \emph{levels} over $\mathbb Z_p$.

We first make explicit a projection argument that will be used when
$p$ does not divide the cardinality of the spectral set. A version of
this argument is implicit in \cite[Lemma~3.1]
{FallonKissMayeliSomlai2023} The main goal of that paper was also to build an inductive framework, which is finally developed in the present paper.
\begin{lem}
\label{lem:multiplication-preserves-spectrality}
Let
\[
    N=kn,
\]
where $k$ is prime and $k\nmid n$. Let $A\subseteq\mathbb Z_N$ be
spectral, and suppose that multiplication by $k$ is injective on $A$.
Suppose, in addition, that
\begin{equation}
\label{eq:cyclotomic-descent}
    \Phi_{dk}(X)\mid m_A(X)
    \quad\Longrightarrow\quad
    \Phi_d(X)\mid m_A(X)
\end{equation}
for every divisor $d$ of $n$, including $d=1$, where
$\Phi_1(X)=X-1$. Then $kA$ is spectral as a subset of the subgroup
\[
    k\mathbb Z_N\cong\mathbb Z_n.
\]

More precisely, if $\Lambda$ is a spectrum of $A$, then the restrictions
to $k\mathbb Z_N$ of the characters indexed by $\Lambda$ form a spectrum
of $kA$.
\end{lem}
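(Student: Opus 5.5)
The plan is to take a spectrum $\Lambda$ of $A$ in $\mathbb Z_N$ and check directly that the restricted characters are pairwise distinct, that there are $|kA|$ of them, and that they are pairwise orthogonal on $kA$. Orthogonality will come from a one-line computation of the order of $\zeta_N^{k(\lambda-\lambda')}$, combined with the descent hypothesis \eqref{eq:cyclotomic-descent}.

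\emph{Setup.} The character of $\mathbb Z_N$ indexed by $\lambda$ is $x\mapsto\zeta_N^{\lambda x}$. Its restriction to the subgroup $k\mathbb Z_N\cong\mathbb Z_n$ is a character $\chi_\lambda$ of that subgroup. Since multiplication by $k$ is injective on $A$, the map $a\mapsto ka$ is a bijection $A\to kA$, so $|kA|=|A|=|\Lambda|$. Moreover, for $\lambda,\lambda'\in\Lambda$,
\[
\sum_{y\in kA}\chi_\lambda(y)\overline{\chi_{\lambda'}(y)}
=\sum_{a\in A}\zeta_N^{(\lambda-\lambda')ka}
=m_A\bigl(\zeta_N^{k(\lambda-\lambda')}\bigr).
\]
So it suffices to show that this vanishes for distinct $\lambda,\lambda'\in\Lambda$. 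Vanishing already forces the restricted characters to be distinct: if $\chi_\lambda=\chi_{\lambda'}$, the sum would equal $|A|\neq0$. Hence the restrictions give $|kA|$ distinct, mutually orthogonal characters, which is a spectrum of $kA$.

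\emph{Key step.} Fix distinct $\lambda,\lambda'\in\Lambda$ and let $e=\operatorname{o}_N(\lambda-\lambda')$. Spectrality of $A$ gives $\Phi_e\mid m_A$. The element $\zeta_N^{k(\lambda-\lambda')}$ is a primitive root of unity of order $e/\gcd(e,k)$. Since $k$ is prime and $k\nmid n$, there are two cases.
\begin{itemize}
\item If $k\nmid e$, the order is still $e$, and $\Phi_e\mid m_A$ gives the required vanishing.
\item If $k\mid e$, write $e=dk$ with $d\mid n$, where possibly $d=1$. The order is $d$. Hypothesis \eqref{eq:cyclotomic-descent} converts $\Phi_{dk}\mid m_A$ into $\Phi_d\mid m_A$, so again $m_A$ vanishes there.
\end{itemize}
The case $d=1$ cannot actually occur: it would require $\Phi_1\mid m_A$, that is $|A|=m_A(1)=0$. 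This is exactly why the hypothesis is stated to include $d=1$. It also confirms that distinct elements of $\Lambda$ never give identical restrictions.

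I expect no real obstacle here. The only point needing care is the order computation $\operatorname{o}\bigl(\zeta_N^{k(\lambda-\lambda')}\bigr)=e/\gcd(e,k)$, together with the bookkeeping that identifies the sum over $kA$ with the sum over $A$. That identification is where injectivity of multiplication by $k$ is used: without it, $kA$ as a set would differ from the multiset $kA$.
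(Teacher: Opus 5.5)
Your proposal is correct and follows the paper's proof essentially step for step. Both arguments use the same order computation $e/\gcd(e,k)$, split into the cases $k\nmid e$ and $e=dk$ with $d\mid n$, and both use injectivity to get $|kA|=|A|=|\Lambda|$. You also supply two points the paper leaves implicit: orthogonality forces the restricted characters to be distinct, and the $d=1$ instance of the hypothesis rules out $\Phi_k\mid m_A$.
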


\begin{proof}
Let $\Lambda$ be a spectrum of $A$, and let
$\lambda,\lambda'\in\Lambda$ be distinct. Let
  $  r=\operatorname{o}_N(\lambda-\lambda')$.
Since $(A,\Lambda)$ is a spectral pair,$
    \Phi_r(X)\mid m_A(X)$.
Clearly
\begin{equation}\label{eq:indukcio0}
    \begin{aligned}
 \sum_{b\in kA}
     e^{2\pi i(\lambda-\lambda')b/N}
 &=
 \sum_{a\in A}
     e^{2\pi i k(\lambda-\lambda')a/N}.
\end{aligned}
\end{equation}

If $k\nmid r$, then
\[
    \operatorname{o}_N\bigl(k(\lambda-\lambda')\bigr)=r,
\]
and therefore the last sum is zero because
$\Phi_r\mid m_A$.
If $k\mid r$, then, since $k\nmid n$, we may write
$    r=dk$
for some $d\mid n$. In this case,
\[
    \operatorname{o}_N\bigl(k(\lambda-\lambda')\bigr)=d.
\]
By \eqref{eq:cyclotomic-descent},
\[
    \Phi_d(X)\mid m_A(X),
\]
so \eqref{eq:indukcio0} is again zero.
Thus the restrictions of the characters indexed by $\Lambda$ are
pairwise orthogonal on $kA$. Since the multiplication by $k$  is injective on $A$, we have that $kA$ is a proper so we have
\[
    |\Lambda|=|A|=|kA|,
\]
and then $\Lambda$ is a spectrum for $kA$.
\end{proof}
For a set $
    B\subseteq \mathbb Z_n\times\mathbb Z_p
$
and $j\in\mathbb Z_p$, we write
\[
    B_j
      =
    \{x\in\mathbb Z_n:(x,j)\in B\}
\]
for the $j$th $p$-level of $B$.

The following proposition contains the main structural observation in
the case where $p$ divides the cardinality.

\begin{prp}[A common spectrum for all levels]
\label{prop:common-spectrum-levels}
Let $p>n$ be prime, and let $(A,\Lambda)$ be a spectral pair in
$
    \mathbb Z_n\times\mathbb Z_p
$.
Suppose that
\[
    |A|=|\Lambda|=p\ell>0.
\]
Then all the levels $A_j$ and $\Lambda_j$ have cardinality $\ell$.
Moreover, there exists a set
$
    E\subseteq\mathbb Z_n,
    \qquad |E|=\ell,
$
such that $E$ is a spectrum of $A_j$ for every
$j\in\mathbb Z_p$.
\end{prp}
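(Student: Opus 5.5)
The plan is to exploit the duality of spectral pairs: in a finite abelian group, $(A,\Lambda)$ is a spectral pair if and only if $(\Lambda,A)$ is, because the character matrix is square and, after normalisation, unitary. So every statement proved about $A$ from the spectrality of $\Lambda$ also holds for $\Lambda$. Throughout I will use that $\ell\le n<p$, since $p\ell=|A|\le np$.

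\emph{Step 1: equal level sizes.} Project $\Lambda$ to the $\mathbb Z_n$ coordinate. There are only $n$ fibres, and $|\Lambda|=p\ell\ge p>n$, so by pigeonhole two distinct elements of $\Lambda$ share the first coordinate. Their difference has the form $(0,v)$ with $v\neq0$, which has order $p$, so $\Phi_p\mid m_A$. By Corollary~\ref{cor:prime-divides-cardinality}(2), every coset of $\mathbb Z_n$ then contains the same number of elements of $A$. Hence $|A_j|=\ell$ for every $j$. Applying the same argument to the dual pair $(\Lambda,A)$ gives $|\Lambda_j|=\ell$.

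\emph{Step 2: the level sums.} For $u\in\mathbb Z_n$ write $S_j(u)=\sum_{x\in A_j}\zeta_n^{ux}\in\mathbb Q(\zeta_n)$. The Fourier coefficient of $A$ at a difference $(u,v)$ equals $\sum_{j}S_j(u)\zeta_p^{vj}$. The key algebraic input is the linear disjointness of $\mathbb Q(\zeta_n)$ and $\mathbb Q(\zeta_p)$, which is exactly the mechanism behind Lemma~\ref{lem:levelwise-cube-rule}. It implies that if this sum vanishes for a single $v\neq0$, then $S_j(u)$ is independent of $j$. If in addition the coefficient at $(u,0)$ vanishes, then $\sum_j S_j(u)=0$, and hence $S_j(u)=0$ for every $j$. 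In divisibility language, $\Phi_{dp}\mid m_A$ and $\Phi_d\mid m_A$ together give $\Phi_d\mid m_{A_j}$ for all $j$, by Lemma~\ref{lem:levelwise-cube-rule}.

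\emph{Step 3: the candidate $E$.} I would take $E$ to be the set of first coordinates of a fixed level $\Lambda_0$. These $\ell$ first coordinates are distinct: two elements of $\Lambda_0$ with the same first coordinate would be the same element. For distinct $e,e'\in E$, put $u=e-e'$ and $d=\operatorname{o}_n(u)$. The pair $(e,0),(e',0)$ in $\Lambda$ gives $\Phi_d\mid m_A$. It remains to prove $\Phi_{dp}\mid m_A$; then Step 2 yields $\Phi_d\mid m_{A_j}$ for every $j$, so $E$ is a spectrum of each $A_j$, with $|E|=\ell=|A_j|$.

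\emph{Main obstacle: obtaining $\Phi_{dp}\mid m_A$.} I expect this to be the hardest step. It amounts to finding two elements of $\Lambda$ in different levels whose first coordinates differ by an element of order exactly $d$. I would argue by contradiction using Corollary~\ref{cor:laba-marshall} together with a counting argument.
\begin{enumerate}
    \item Suppose $\Phi_{dp}\nmid m_A$. Then no two elements of $\Lambda$ in different levels have first-coordinate difference of order $d$.
    \item Each of the $p$ levels of $\Lambda$ has exactly $\ell$ elements. So for every $j$, the first coordinates of $\Lambda_j$ avoid the translates of $e$ by elements of order $d$, for every first coordinate $e$ appearing in the other levels.
    \item The aim is to show that the first coordinates of $\Lambda$ then split into classes, with levels confined to separate classes. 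Since $p>n$, there are more levels than available first coordinates, which should force a collision of the forbidden type.
\end{enumerate}
If the direct counting does not close, the fallback is to apply Step 1 to $\Lambda$ in place of $A$, extracting its level structure from $\Phi_p\mid m_\Lambda$, and then to choose the level $\Lambda_0$ used to define $E$ more carefully.
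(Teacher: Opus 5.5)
\textbf{The gap.} Your Steps 1 and 2 are correct and match the paper. Pigeonhole on $\Lambda$ gives $\Phi_p\mid m_A$ and hence equal level sizes. The levelwise cube rule turns $\Phi_d\mid m_A$ and $\Phi_{dp}\mid m_A$ into $\Phi_d\mid m_{A_j}$ for all $j$. The gap is the step you flagged yourself: nothing in the proposal proves $\Phi_{dp}\mid m_A$, and the counting route you sketch cannot prove it for an arbitrary level $\Lambda_0$.

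The pigeonhole from $p>n$ only produces two elements of $\Lambda$ with the \emph{same} first coordinate. Their difference is $(0,v)$, which yields $\Phi_p$ and never $\Phi_{dp}$ with $d>1$. Moreover, the constraint ``no cross-level difference has first coordinate of order $d$'' is combinatorially consistent. In $\mathbb Z_{30}$ with $d=5$, take $\Lambda_0=\{0,6\}$ and every other level equal to $\{1,2\}$. All levels have size $2$ and there are $p>30$ of them. The cross-level first-coordinate differences are $0,\pm1,\pm2,\pm4,\pm5$, with orders $1,30,15,15,6$, never $5$. So counting alone does not force a ``collision of the forbidden type''.

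\textbf{The missing idea.} What you need is your fallback made precise: choose the level. Call a level $\Lambda_j$ exceptional if it contains a first coordinate $x$ that occurs in no other level. Assigning such an $x$ to each exceptional level is injective, so at most $n<p$ levels are exceptional. Take $E=\Lambda_{j_0}$ to be a non-exceptional level. For distinct $e,f\in E$ with $d=\operatorname{o}_n(e-f)$, there is $s\neq j_0$ with $(e,s)\in\Lambda$. Then $(e,s)-(f,j_0)$ has order $\operatorname{lcm}(d,p)=dp$, so $\Phi_{dp}\mid m_A$ directly, and your Step 2 finishes.

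\textbf{Remark.} A posteriori, every level is in fact a common spectrum. Fix $x\in\Lambda_t$ with $t\neq j_0$, and $y\in E$. The level sums $\sum_{a\in A_j}\zeta_n^{(x-y)a}$ are independent of $j$. Hence the coefficients of $a\mapsto\zeta_n^{xa}$ in the common orthogonal basis $\{a\mapsto\zeta_n^{ya}\}_{y\in E}$ of every $L^2(A_j)$ do not depend on $j$. Parseval then makes $\sum_{a\in A_j}\zeta_n^{(x-x')a}$ independent of $j$ for $x,x'\in\Lambda_t$. This argument still needs one good level to start from, and it uses Parseval rather than counting.
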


\begin{proof}
Since
$
    |\Lambda|=p\ell\geq p>n
$,
the projection of $\Lambda$ onto $\mathbb Z_n$ is not injective.
Therefore, there are
\[
    (x,s),(x,t)\in\Lambda,
    \qquad s\neq t.
\]
The order of the difference of these two elements of $\Lambda$ is Consequently $
    \Phi_p(X)\mid m_A(X)$. Then Corollary \ref{cor:prime-divides-cardinality} implies the integers $|A_j|$ are all equal.
By symmetry of spectral pairs, $(\Lambda,A)$ is also a spectral pair, so the same argument applied to
$(\Lambda,A)$ gives
$
    |\Lambda_j|=\ell$
    for every $j\in\mathbb Z_p
$.

For $x\in\mathbb Z_n$, define
\[
    r(x)
       =
    \bigl|\{j\in\mathbb Z_p:x\in\Lambda_j\}\bigr|.
\]
Call a level $\Lambda_j$ \emph{exceptionally bad} if it contains an element
$x$ with $r(x)=1$. There are at most $n$ exceptionally bad levels. Indeed,
to every exceptional level one may assign an element of
$\mathbb Z_n$ that occurs in that level and in no other level, and
distinct exceptional levels must be assigned distinct elements.

Since $p>n$, there is at least one non-exceptionally bad level $\Lambda_{j_0}$. Set
\[
    E=\Lambda_{j_0}.
\]
Then
$    |E|=\ell$,
and every $e\in E$ belongs to at least one other level $\Lambda_s$,
with $s\neq j_0$.

We claim that $E$ is a spectrum of every $A_j$. Let $e,f\in E$ be
distinct, and put
$
    d=\operatorname{o}_n(e-f)
$.
Because
\[
    (e,j_0),(f,j_0)\in\Lambda,
\]
their difference has order $d$ in
$\mathbb Z_n\times\mathbb Z_p$. Hence spectrality gives
\begin{equation}
\label{eq:phi-d-from-same-level}
    \Phi_d(X)\mid m_A(X).
\end{equation}
Since $\Lambda_{j_0}$ is non-exceptional, there is an
$s\neq j_0$ such that
$(e,s)\in\Lambda$.
The difference
$ (e,s)-(f,j_0) \in \Lambda-\Lambda$
has order
\[
    \operatorname{lcm}(d,p)=dp,
\]
because $d\mid n$ and $p\nmid n$. Therefore,
\begin{equation}
\label{eq:phi-dp-from-different-levels}
    \Phi_{dp}(X)\mid m_A(X).
\end{equation}
By Lemma~\ref{lem:levelwise-cube-rule}, the divisibilities
\eqref{eq:phi-d-from-same-level} and
\eqref{eq:phi-dp-from-different-levels} imply
\[
    \Phi_d(X)\mid m_{A_j}(X)
    \qquad
    \text{for every }j\in\mathbb Z_p.
\]
Thus the characters indexed by $E$ are pairwise orthogonal on every
$A_j$. Since
\[
    |E|=|A_j|=\ell,
\]
the set $E$ is a spectrum of $A_j$ for every $j$.
\end{proof}
\begin{rmk}
In fact, we should have defined the exceptionally bad level such that it contains two points for which \(r(x)=1\), which immediately shows that half of the lemma can certainly be further improved, but it is most understandable in this simplest form.
\end{rmk}

We can now prove the main result of the paper. Let us recall Theorem \ref{thm:induction}
\begin{thm*}
Let $n$ be square-free and let $p>n$ be prime. Suppose that
\[
    \mathrm{S\!-\!T}(\mathbb Z_n)
\]
holds. Then
\[
    \mathrm{S\!-\!T}
       (\mathbb Z_n\times\mathbb Z_p)
\]
holds. 
\end{thm*}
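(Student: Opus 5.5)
The plan is to split according to whether $p$ divides $|A|$. Throughout, let $(A,\Lambda)$ be a spectral pair in $\mathbb Z_n\times\mathbb Z_p\cong\mathbb Z_{np}$, and let $\pi$ denote the projection onto the first coordinate $\mathbb Z_n$.

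\emph{Case $p\mid |A|$.} Write $|A|=p\ell$. By Proposition~\ref{prop:common-spectrum-levels}, every level $A_j$ has size $\ell$ and is spectral in $\mathbb Z_n$. By $\mathrm{S\!-\!T}(\mathbb Z_n)$, each $A_j$ therefore tiles $\mathbb Z_n$. Since $n$ is square-free, the \L aba--Meyerowitz observation recalled in the introduction says more: every tile of size $\ell$ is a complete set of coset representatives of a subgroup of order $n/\ell$. In the cyclic group $\mathbb Z_n$ this subgroup $H$ is unique, and it depends only on $\ell$. Hence $A_j\oplus H=\mathbb Z_n$ for every $j$, with one and the same $H$. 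It follows that $A\oplus (H\times\{0\})=\mathbb Z_n\times\mathbb Z_p$: each element $(x,j)$ is written uniquely as $(a,j)+(h,0)$ with $a\in A_j$ and $h\in H$. The key point of this case is exactly this uniformity of the complement across levels. Note that Proposition~\ref{prop:common-spectrum-levels} is used only to know that every level is spectral; the common spectrum $E$ is not needed beyond that.

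\emph{Case $p\nmid |A|$.} First I show $|A|\le n$. If $|A|>n$, then $\pi$ is not injective on $\Lambda$. Two elements of $\Lambda$ with the same first coordinate differ by an element of order $p$, so $\Phi_p\mid m_A$, and Corollary~\ref{cor:prime-divides-cardinality} gives $p\mid |A|$, a contradiction. By the symmetry of spectral pairs, the same argument shows that $\pi$ is injective on $A$: otherwise $\Phi_p\mid m_\Lambda$, whence $p\mid|\Lambda|=|A|$. Injectivity of $\pi$ on $A$ is exactly injectivity of multiplication by $p$ on $A$.

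Next I verify the descent condition \eqref{eq:cyclotomic-descent} of Lemma~\ref{lem:multiplication-preserves-spectrality} with $k=p$.
\begin{itemize}
\item For $d=1$, the hypothesis $\Phi_p\mid m_A$ never holds, since it would force $p\mid|A|$.
\item For $d\mid n$ with $d>1$, suppose $\Phi_{dp}\mid m_A$ but $\Phi_d\nmid m_A$. Then Corollary~\ref{cor:laba-marshall} gives $|A|\ge p>n\ge|A|$, which is impossible.
\end{itemize}
So Lemma~\ref{lem:multiplication-preserves-spectrality} applies, and $pA$ is spectral in $p\mathbb Z_{np}\cong\mathbb Z_n$. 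Under the CRT identification, $pA$ corresponds to $\pi(A)$ multiplied by $p$, which is a group automorphism of $\mathbb Z_n$. Therefore $\pi(A)$ is spectral in $\mathbb Z_n$. By $\mathrm{S\!-\!T}(\mathbb Z_n)$ there is a set $C\subseteq\mathbb Z_n$ with $\pi(A)\oplus C=\mathbb Z_n$. Since $\pi$ is injective on $A$, we conclude $A\oplus(C\times\mathbb Z_p)=\mathbb Z_n\times\mathbb Z_p$.

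The main obstacle is ensuring the hypotheses of Lemma~\ref{lem:multiplication-preserves-spectrality} hold in the second case: both injectivity of $\pi$ on $A$ and the descent condition. Both hinge on the cardinality bound $|A|\le n<p$, obtained from the pigeonhole argument on $\Lambda$ combined with the \L aba--Marshall bound. The first case is then essentially immediate from Proposition~\ref{prop:common-spectrum-levels} and the uniqueness of the standard subgroup complement for square-free $n$.
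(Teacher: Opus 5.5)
Your proof is correct and follows the paper's argument essentially step for step. In the case $p\mid |A|$ you combine Proposition~\ref{prop:common-spectrum-levels} with the common standard subgroup complement $\ell\mathbb Z_n$ for square-free $n$; in the case $p\nmid|A|$ you use injectivity of multiplication by $p$, the \L aba--Marshall bound and Lemma~\ref{lem:multiplication-preserves-spectrality}, exactly as the paper does. The differences are cosmetic: your preliminary pigeonhole bound $|A|\le n$ is redundant once injectivity of $\pi$ on $A$ is known (and, contrary to your closing remark, injectivity does not depend on that bound), and you dispose of $d=1$ via Corollary~\ref{cor:prime-divides-cardinality} rather than the \L aba--Marshall bound.
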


\begin{proof}
Let
\[
    G=\mathbb Z_n\times\mathbb Z_p
       \cong \mathbb Z_{np},
\]
and let $(A,\Lambda)$ be a spectral pair in $G$. We distinguish two
cases.

\medskip
\noindent
\textbf{Case 1: $p\nmid |A|$.}
We first show that multiplication by $p$ is injective on $A$. Suppose
otherwise that
    $pa=pa'$
for distinct $a,a'\in A$. Then $a-a'$ is a nonzero element of order $p$.
Since $(\Lambda,A)$ is also a spectral pair, it follows that
$   \Phi_p(X)\mid m_\Lambda(X)$. By Corollary \ref{cor:prime-divides-cardinality} $p \mid \vert \Lambda \vert =\vert A \vert$, contrary to the assumption  $p \nmid \vert A\vert$. 
Thus $pA$ is a set contained in the subgroup
$
    p\mathbb Z_{np},
$
which has cardinality $n$. Consequently,
\[
    |A|=|pA|\leq n<p.
\]
Let $d\mid n$, and suppose that
\[
    \Phi_{dp}(X)\mid m_A(X).
\]
If $\Phi_d\nmid m_A$, then Corollary~\ref{cor:laba-marshall} gives
\[
    |A|\geq p,
\]
contradicting $|A|<p$. Hence
\[
    \Phi_{dp}(X)\mid m_A(X)
    \quad\Longrightarrow\quad
    \Phi_d(X)\mid m_A(X)
\]
for every $d\mid n$.
Lemma~\ref{lem:multiplication-preserves-spectrality} now
shows that $pA$ is spectral in
$
    p\mathbb Z_{np}\cong\mathbb Z_n$.

Let
\[
    B=\{x\in\mathbb Z_n:(x,j)\in A
         \text{ for some }j\in\mathbb Z_p\}
\]
be the projection of $A$ onto $\mathbb Z_n$. The injectivity proved
above shows that there is a function
\[
    f:B\longrightarrow\mathbb Z_p
\]
such that
\[
    A=\{(x,f(x)):x\in B\}.
\]
Since multiplication by $p$ is an automorphism of $\mathbb Z_n$, the
spectrality of $pA$ implies that $B$ is spectral in $\mathbb Z_n$\footnote{This observation treats the annoying fact that the projection to the first coordinate and multiplication by $p$ is not exactly the same operation}.
By $\mathrm{S\!-\!T}(\mathbb Z_n)$, there is a set
$C\subseteq\mathbb Z_n$ such that
$
    B\oplus C=\mathbb Z_n$.
It follows immediately that
$
    A\oplus(C\times\mathbb Z_p)
       =
    \mathbb Z_n\times\mathbb Z_p$.

\medskip
\noindent
\textbf{Case 2: $p\mid |A|$.}
Write
$
    |A|=p\ell.$
By Proposition~\ref{prop:common-spectrum-levels}, there is a set
$    E\subseteq\mathbb Z_n,
    \qquad |E|=\ell,
$
which is a common spectrum of every level $A_j$.
The assumption $\mathrm{S\!-\!T}(\mathbb Z_n)$ therefore implies that
each $A_j$ tiles $\mathbb Z_n$. In particular, $\ell\mid n$.

Since $n$ is square-free, the square-free case of the
Coven--Meyerowitz conjecture (or Tijdeman's theorem) gives the same standard subgroup of order $\Z_{\frac{n}{l}}$ as a tiling complement
for every tile of cardinality $\ell$. More precisely,
\[
    A_j\oplus \ell\mathbb Z_n
       =
    \mathbb Z_n
    \qquad
    \text{for every }j\in\mathbb Z_p.
\]
It follows level by level that
\[
    A\oplus
       \bigl(\ell\mathbb Z_n\times\{0\}\bigr)
       =
    \mathbb Z_n\times\mathbb Z_p.
\]
Thus $A$ tiles $G$ in both cases.
\end{proof}

\begin{cmt}
\label{rem:where-squarefree-used}
The square-free hypothesis is used only in the final step of Case~2,
where it provides a common tiling complement for all the levels
$A_j$. Proposition~\ref{prop:common-spectrum-levels} itself shows that
the levels have a common spectrum without using the
Coven--Meyerowitz classification. Thus, for a more general modulus
$n$, the same argument works whenever every family of spectra of one
set in $\mathbb Z_n$ admits a common tiling complement.
\end{cmt}

\section{Extension of this work}
\textbf{Other groups satisfying the Coven-Meyerowitz conjecture:}

The square-free assumption in
Theorem~\ref{thm:induction} is used only in the case
$p\mid |A|$, in order to obtain a common tiling complement for the
$p$-levels of $A$. This suggests that the theorem should extend more
generally to other finite (cyclic) groups for which the Coven--Meyerowitz conjecture is
known. Indeed, Proposition~\ref{prop:common-spectrum-levels} produces a
common spectrum for all the levels, while the Coven--Meyerowitz
conditions associate to each level a standard tiling complement
determined by its prime-power cyclotomic divisors. To complete such an
extension, one would need an additional compatibility argument showing
that the common spectrum forces 
the standard complements to agree across the levels. The results of
\L aba and Londner therefore provide natural non-square-free families in
which to pursue this extension.

\textbf{Answering a question of Zhang (and Fan):}

Fan and Zhang \cite{FanZhangPeriodicity} recently proved that
\[
    \mathbb Z_3^4
\]
is a T--S group but not an S--T group.  They also observe that
no example is currently known for which the opposite one-sided
phenomenon occurs, namely an S--T group that is not a T--S group.
In particular, this example of Fan and Zhang is the first
known finite abelian group for which Fuglede's conjecture holds in
exactly one direction \cite{FanZhangPeriodicity}.

The failure of the S--T property is automatically preserved under
direct products (and probably other group extensions as well). More precisely, if $E\subseteq G$ is spectral but does
not tile $G$, and $H$ is any finite abelian group, then
\[
    E\times H
\]
is spectral but does not tile $G\times H$. Indeed, if $\Lambda$ is a
spectrum of $E$, then $\Lambda\times\widehat H$ is a spectrum of
$E\times H$. On the other hand, a tiling of $G\times H$ by
$E\times H$ would induce a tiling of $G$ by $E$, which is impossible.

Consequently, in order to construct infinitely many groups for which
T--S holds but S--T fails, it remains only to preserve the T--S
property. A natural objective is therefore to prove a large-prime
extension of the form
\[
    \mathrm{T\!-\!S}(G)
    \quad\Longrightarrow\quad
    \mathrm{T\!-\!S}(G\times\mathbb Z_q)
\]
for $G=\mathbb Z_3^4$ and all sufficiently large primes $q$. Iterating
such a result with pairwise distinct primes would produce an infinite
family
\[
    \mathbb Z_3^4\times
    \mathbb Z_{q_1q_2\cdots q_r},
    \qquad r\geq1,
\]
of T--S groups which are not S--T groups.

While I believe this is doable, the rapid pace of developments surrounding Fuglede's conjecture convinced me to first present the inductive result on Fuglede's conjecture which is likely to be of broader interest.


\section{Acknowledgement}
The basic idea behind this article is the author’s own, but of course, much of this appears in the author’s earlier work, co-authored with Fallon, Kiss, and Mayeli, so I am very grateful to these people for our previous discussions. I also discussed the method with Máté Matolcsi and Itay Londner, and I am very grateful to them as well for those conversations. In addition, writing it in this form and at this pace would not have been possible without OpenAI’s ChatGPT and Google’s Gemini, which provided great assistance in verifying the proof, collecting precisely the relevant literature and earlier results according to the author's taste, and formatting the text—though I am, of course, responsible for any errors in the final version.

\bibliographystyle{plain}
\bibliography{references}
\end{document}